\documentclass[11pt,reqno,a4paper]{amsart}
\usepackage[norelsize]{algorithm2e}
\usepackage{snowshovelingsty}

\usepackage{subfiles}

\author{Mohit Bansil}
\address{Department of Mathematics, Michigan State University}
\email{bansilmo@msu.edu}

\author{Jun Kitagawa}
\address{Department of Mathematics, Michigan State University}
\email{kitagawa@math.msu.edu}

\title[]{An optimal transport problem with storage fees}
\subjclass[2010]{49J45, 49K40}
\thanks{JK's research was supported in part by National Science Foundation grant DMS-1700094.}
\begin{document}
\begin{abstract}
We introduce and investigate properties of a variant of the semi-discrete optimal transport problem. In this problem, one is given an absolutely continuous source measure and cost function, along with a finite set which will be the support of the target measure, and a ``storage fee'' function. The goal is then to find a map for which the total transport cost plus the storage fee evaluated on the masses of the pushforward of the source measure is minimized. We prove existence and uniqueness for the problem, derive a dual problem for which strong duality holds, and give a characterization of dual maximizers and primal minimizers. Additionally, we find some stability results for minimizers.
\end{abstract}

\maketitle

\tableofcontents

\section{Introduction}

\subsection{Semi-discrete optimal transport}

We begin by recalling the classical optimal transport problem. Suppose $X$, $Y$ are metric spaces, $c: X\times Y\to\R$ is a measurable \emph{cost function}, and $\mu$, $\nu$ are Borel probability measures on $X$ and $Y$ respectively. Then the \emph{optimal transport problem} or \emph{Monge problem} transporting $\mu$ to $\nu$ is to find a measurable mapping $T: X\to Y$ such that $T_\#\mu=\nu$ (here recall the \emph{pushforward measure} is defined by $T_\#\mu(E)=\mu(T^{-1}(E))$ for any measurable $E\subset Y$), and $T$ satisfies
\begin{align}\label{eqn: monge}
\int_X c(x, T(x)) d\mu = \min_{S_\#\mu=\nu} \int_X c(x, S(x)) d\mu.
\end{align}
If $X$ is a subset of Euclidean space, $\mu$ is absolutely continuous with respect to Lebesgue measure, and $\nu$ is a finite linear combination of delta measures, the above is usually referred to as the \emph{semi-discrete} optimal transport problem.

We will now be interested in the following variant of the semi-discrete optimal transport problem, where we introduce a ``storage fee.'' Fix a finite collection of $N$ points  $Y:=\{y_j\}_{j=1}^N \subset \R^n$ and a function $F: \R^N \to \R$, assume $\mu$ is an absolutely continuous probability measure on $X\subset \R^n$. This variant is to find a pair $(T, \weightvect)$ where $\weightvect=(\weightvect^1,\ldots, \weightvect^N)\in \R^N$ and $T: X\to Y$ is measureable satisfying
\begin{align*}
T_\#\mu = \sum_{j=1}^N \weightvect^j \delta_{y_j}
\end{align*}
such that
\begin{align}\label{eqn: monge ver}
\int_X c(x, T(x)) d\mu + F(\weightvect) = \min_{\tilde \weightvect\in \R^N,\ \tilde{T}_\#\mu = \sum_{j=1}^N \tilde\weightvect^j \delta_{y_j}} \int_X c(x, \tilde{T}(x)) d\mu + F(\tilde\weightvect).
\end{align}
We will consider a relaxation of this problem which we will refer to as the \emph{primal problem} for the remainder of the paper. To define this relaxation, we write $\Pi(\mu, \nu)$ to denote the space of probability measures on $X\times Y$ whose left and right marginals are $\mu$ and $\nu$ respectively. Then, we wish to find a pair $(\gamma, \weightvect)$ where $\weightvect\in \R^N$ and $\gamma\in \Pi(\mu, \sum_{j=1}^N \weightvect^j\delta_{y_j})$, satisfying
\begin{align}\label{eqn: primal problem}
\int_{X\times Y} c(x, y) d\gamma + F(\weightvect) = \min_{\tilde \weightvect\in \R^N,\ \tilde{\gamma}\in \Pi(\mu, \sum_{j=1}^N \tilde \weightvect^j\delta_{y_j})} \int_{X\times Y} c(x, y) d\tilde\gamma + F(\tilde \weightvect).
\end{align}
The above relaxation is the analogue of relaxing the Monge problem \eqref{eqn: monge} in classical optimal transport to the \emph{Kantorovich problem}, which we recall is (fixing Borel probability measures $\mu$ and $\nu$ on any two topological spaces $X$ and $Y$) the problem of finding a measure $\gamma\in \Pi(\mu, \nu)$ satisfying 
\begin{align}\label{eqn: kantorovich}
\int_{X\times Y} c(x, y) d\gamma = \min_{\tilde{\gamma}\in \Pi(\mu, \nu)} \int_{X\times Y} c(x, y) d\tilde\gamma.
\end{align}

Once a minimizing pair in the above primal problem \eqref{eqn: primal problem} is found, it is clear the measure $\gamma$ is a solution in the Kantorovich problem \eqref{eqn: kantorovich} with the choice $\nu=\sum_{j=1}^N \weightvect^j\delta_{y_j}$. Hence under standard conditions on the cost function and $\mu$, it is easily seen that a solution of \eqref{eqn: primal problem} gives rise to a solution of the Monge version of the problem \eqref{eqn: monge ver}. For more details see Subsection \ref{subsection: primal from dual}.

One interpretation of this variant in terms of economics is the following. A manufacturer has a distribution of factories $\mu$, all producing the same product, and is leasing a finite number of warehouses at the locations $y_j$. At the end of each production cycle, the manufacturer must ship all of their product to be stored at some combination of the warehouses. The manufacturer can choose how many units of their product is to be stored at each warehouse, but the leasing company will charge a storage fee given by $F$ based on the capacity used. Additionally, there is a cost associated to the transportation itself given by $c$, and the goal is to minimize the total cost of transport plus storage.
\subsection{Previous results}
The paper \cite{CrippaJimenezPratelli09} deals with the problem presented here in the specific case of cost function given by $c(x, y)=\abs{x-y}^p$, and storage fee function of the form $F(\weightvect)=\sum_{j=1}^N \weightvect^jh_j(\weightvect^j)$ for some functions $h_j$ (note however, the authors mention their results can be extended to more general cost functions satisfying the condition \eqref{def: twist}). We mention our characterization from Subsection \ref{subsec: characterization} matches the characterization of optimizers given in \cite{CrippaJimenezPratelli09}, however, our current result introduces the associated dual problem, and a stability result which are new. Additionally, \cite{CrippaJimenezPratelli09} also analyzes an associated but different variational problem which we do not discuss, our problem is equivalent to what Crippa, Jimenez, and Pratelli refer to as finding an ``optimum,'' while the above reference deals with the additional problem of finding an ``equilibrium.''

There are also a number of results in the literature dealing with the so-called bilevel location problem using the framework of optimal transport: this can be viewed as a two level problem in which there is a ``lower level problem'' equivalent to the problem discussed in this manuscript, followed by a second ``upper level problem'' consisting of minimizing over the locations $\{y_j\}_{j=1}^N$ in the target domain. The paper \cite{MallozziPassarelli17}, analyzes the case when the lower level problem corresponds to our problem with $c(x, y)=\abs{x-y}^2$ in $\R^2\times \R^2$ and $F(\weightvect)=\inner{a}{\weightvect}$ for a fixed vector $a$, and shows existence and uniqueness under certain conditions. The result \cite{CarlierMallozzi18} views the problem in an economic context, their lower level problem is related to a \emph{partial} optimal transport problem with an associated storage fee; note however that their problem is not exactly an optimal transport problem as it arises from the problem of monopolistic pricing, and involves an extra nonlinearity in the definition of Laguerre cells. We emphasize that we do not deal with the ``upper level problem'', while the above two references also analyze that problem as well.

\subsection{Outline}
We begin in Section \ref{eqn: existence of minimizers} by showing existence of minimizers in the variant \eqref{eqn: primal problem}. In Section \ref{section: dual problem}, we derive a  maximization problem that is dual to \eqref{eqn: primal problem} and show strong duality, which is the content of Theorem \ref{thm: strong duality}. In the subsequent Section \ref{section: dual characterization}, we establish some properties of dual maximizers and primal minimizers, which we utilize to show a characterization of optimizers in both (Theorem \ref{thm: conditions for optimizers}). Finally in Section \ref{section: approximation}, we establish some stability results of minimizers of \eqref{eqn: primal problem}, under convergence of the storage fee functions.
\subsection{Notation and conventions}
We will fix some notation and conventions to be used in the remainder of the paper. We continue to fix positive integers $N$ and $n$ and a collection $Y:=\{y_j\}_{j=1}^N\subset \R^n$. We also denote the standard $N$-simplex by
\begin{align*}
 \weightvectset:=\{{\weightvect}\in \R^N\mid \sum_{j=1}^N\weightvect^j=1,\ \weightvect^j\geq 0\},
\end{align*}
 and given a vector ${\weightvect}\in \weightvectset$ we write $\displaystyle\nu_{{\weightvect}}:=\sum_{j=1}^N\weightvect^j\delta_{y_j}$. We reserve the notation $\onevect$ for the vector in $\R^N$ whose components are all $1$.
The space of Borel probability measures on a topological space $X$ will be denoted $\Prob(X)$, while the set of measures on a space $X\times Y$ with left and right marginals equal to measures $\mu\in \Prob(X)$ and $\nu\in \Prob(Y)$ respectively will be written as $\Pi(\mu, \nu)$. Projection from $X\times Y$ to $X$ and $Y$ will be written $\pi_X$ and $\pi_Y$. 

We will also identify any real valued function on $Y$ with a vector in $\R^N$ in the obvious way, and assume that $\spt\mu=X\subset \R^n$ for the remainder of the paper. Also, in order to simplify arguments we will always assume that $X$ is compact and $c: X\times Y\to \R_{\geq 0}$ is at least continuous throughout.

Also given any convex function $f$, we write $\dom(f):=\{x\mid f(x)<\infty\}$ to denote its \emph{effective domain}. The function $F$ will be assumed to be lower semicontinuous on $\R^N$ for Section \ref{eqn: existence of minimizers}, while for Section \ref{section: dual problem} and after we assume $F$ is a proper, closed, convex function, with $\dom(F)\subset \weightvectset$.

\section{Existence of minimizers}\label{eqn: existence of minimizers}
In this section we will prove existence of minimizers for our problem \eqref{eqn: primal problem}.
\begin{defin}
 A collection of $\Gamma\subset \Prob{(X)}$  is said to be \emph{tight} if for any $\epsilon>0$, there exists a compact set $K\subset X$ such that $\mu(K)>1-\epsilon$ for every $\mu\in \Gamma$.
\end{defin}
We recall the following elementary lemma.
\begin{lem}\label{lem: tight}
	Let $\Gamma_\mu$ be the collection of all measures $\gamma$ on $X \times Y$ with left marginal $\mu$, for some fixed $\mu\in \mathcal{P}(X)$. Then $\Gamma_\mu$ is tight.
\end{lem}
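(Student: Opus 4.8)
The plan is to reduce the tightness of the whole family $\Gamma_\mu$ to the tightness of the single measure $\mu$, using the compactness available on the $Y$-factor to lift a compact set from $X$ to the product. The starting point is the classical fact (Ulam's theorem) that every Borel probability measure on a Polish space — and more generally on any space on which finite Borel measures are inner regular with respect to compact sets — is tight. Applying this to the fixed measure $\mu\in\mathcal P(X)$ gives, for each $\epsilon>0$, a compact set $K_X\subset X$ with $\mu(X\setminus K_X)<\epsilon$.

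Next I would promote $K_X$ to a compact subset of the product. Since $Y$ is compact in our setting, $K:=K_X\times Y$ is a compact subset of $X\times Y$. For an arbitrary $\gamma\in\Gamma_\mu$, the defining property that the left marginal of $\gamma$ equals $\mu$ yields
\[ \gamma(K)=\gamma(K_X\times Y)=\mu(K_X)>1-\epsilon . \]
Crucially, $K$ does not depend on $\gamma$, so this bound holds uniformly over $\gamma\in\Gamma_\mu$, which is exactly the definition of tightness.

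The only substantive input is the tightness of the single measure $\mu$, which I would simply cite rather than reprove; everything else is bookkeeping — checking that $K_X\times Y$ is compact (product of two compact sets) and that the marginal identity $\gamma(A\times Y)=\mu(A)$ applies with $A=K_X$. The one point worth flagging explicitly is that compactness of $Y$ is genuinely used here: without it, the right marginals of elements of $\Gamma_\mu$ would range over all of $\mathcal P(Y)$, which need not be tight, and the statement would fail. So if the ambient hypotheses only guarantee $Y$ Polish, the lemma would need to be restated with the right marginals confined to a tight subfamily; under the standing assumptions of this paper, compactness of $Y$ makes the argument above go through verbatim.
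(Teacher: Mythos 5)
Your proof is correct and follows essentially the same route as the paper: tightness of the single measure $\mu$ (the paper gets this from separability of $X\subset\R^n$, you cite Ulam's theorem), lifting the compact set to $K_X\times Y$ using that $Y$ is compact (in the paper, $Y$ is finite, which gives compactness), and the marginal identity $\gamma(K_X\times Y)=\mu(K_X)$. Your remark that compactness of $Y$ is genuinely needed is a fair observation, and it is satisfied here since $Y$ is a finite set.
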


\begin{proof}
	Since $X \subset \R^n$, $X$ is separable and so the collection $\{\mu\}$ is tight. Now let $\eps > 0$ be given. Choose $K \subset X$, compact so that $\mu(K) > 1 - \eps$. Note that since $Y$ is finite, $K\times Y$ is also compact. Then for any $\gamma \in \Gamma_\mu$, we find
\begin{align*}
 \gamma(K \times Y)=\mu(K)>1-\eps,
\end{align*}
hence $\Gamma_\mu$ is tight. 
\end{proof}

As a corollary we see that $\Gamma_\mu$ is relatively weakly compact by Prokhorov's Theorem (see \cite[Theorem 5.1]{Billingsley99}). With this compactness in hand, existence of a minimizer follows easily.

\begin{thm}\label{thm: existence}
 There exist minimizers of the primal problem \eqref{eqn: primal problem} if $c$ is continuous and bounded, and $F$ is lower semicontinuous.
\end{thm}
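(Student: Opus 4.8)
The plan is to run the direct method of the calculus of variations, feeding in exactly the compactness just obtained. First I would fix a minimizing sequence $(\gamma_k)_k \subset \Gamma_\mu$ for \eqref{eqn: primal problem}: the feasible set $\Gamma_\mu$ is nonempty (it contains, e.g., the product of $\mu$ with any element of $\mathcal{P}(Y)$), so the infimum $m$ of the objective is $<+\infty$ and such a sequence exists. By Lemma \ref{lem: tight}, $\Gamma_\mu$ is tight, hence relatively weakly compact by Prokhorov's theorem; passing to a subsequence (not relabelled) we may assume $\gamma_k \rightharpoonup \gamma$ weakly for some $\gamma \in \mathcal{P}(X\times Y)$.

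Second, I would check that the weak limit $\gamma$ is again admissible. Testing the convergence $\gamma_k \rightharpoonup \gamma$ against bounded continuous functions that depend only on the $X$-variable shows that the left marginal of $\gamma$ equals the common left marginal $\mu$ of the $\gamma_k$, so $\gamma \in \Gamma_\mu$. (Equivalently, $\Gamma_\mu$ is weakly closed, and a closed subset of a relatively weakly compact set is weakly compact; the theorem is then just the statement that a lower semicontinuous functional on this weakly compact set attains its minimum.)

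Third, I would pass to the limit in the objective. The transport cost $\gamma \mapsto \int_{X\times Y} c\,d\gamma$ is weakly continuous because $c$ is continuous and bounded, so it converges along the subsequence. For the penalty term, the decisive point is that $Y$ is finite: the right marginals $\nu_k$ of the $\gamma_k$ all lie in the compact simplex $\mathcal{P}(Y)\subset\R^{|Y|}$, and weak convergence of the plans forces $\nu_k \to \nu$ there, where $\nu$ is the right marginal of $\gamma$. Lower semicontinuity of $F$ then gives $F(\nu)\le \liminf_k F(\nu_k)$. Adding the two contributions and using that $\gamma$ is admissible,
\[
\int_{X\times Y} c\,d\gamma + F(\nu) \;\le\; \liminf_{k\to\infty}\Big(\int_{X\times Y} c\,d\gamma_k + F(\nu_k)\Big) \;=\; m,
\]
so $\gamma$ attains the infimum and is the desired minimizer. (As a byproduct, since $c$ is bounded and $F$ never takes the value $-\infty$, the left-hand side is finite, so $m>-\infty$ as well.)

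The extraction of the convergent subsequence and the persistence of the left marginal are routine. The one step that deserves care is handling the continuous cost and the merely lower semicontinuous penalty together: one needs weak convergence of the plans to yield genuine convergence — not just lsc-compatible convergence — of the object on which $F$ acts, and this is precisely where finiteness of $Y$ is used, since it collapses the second marginal into a finite-dimensional compact set. (If instead $F$ were given as a lower semicontinuous functional of the plan $\gamma$ itself for the weak topology, the same three steps would go through verbatim, even more directly.)
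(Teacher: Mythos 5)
Your proposal is correct and follows essentially the same route as the paper: the direct method, with tightness of $\Gamma_\mu$ plus Prokhorov giving a weakly convergent subsequence, weak continuity of the cost term, convergence of the right marginals (which the paper justifies via the Portmanteau theorem applied to the continuity sets $X\times\{y_j\}$, while you appeal directly to the finiteness of $Y$ — equivalent, since the indicator of $X\times\{y_j\}$ is bounded and continuous), and lower semicontinuity of $F$ to close the argument. Your explicit remarks on nonemptiness of $\Gamma_\mu$ and persistence of the left marginal are details the paper leaves implicit, but the substance is identical.
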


\begin{proof}

Let $\gamma_i \in \Gamma_\mu$ be a minimizing sequence, i.e. $\int_{X\times Y} c d\gamma_i+F({\weightvect_i})$ approaches the minimum value, where $\nu_{\weightvect_i}$ is the right marginal of $\gamma_i$. By the above remark $\Gamma_\mu$ is compact and so there is a subsequence of $\gamma_i$ which we do not relabel, that  converges weakly to some $\gamma \in \Gamma_\mu$. We will show that $\gamma$ is actually a minimizer. Let $\weightvect$ be the vector in $\R^N$ so that $\nu_\weightvect$ is the right marginal of $\gamma$. 

Indeed since $c$ is continuous and bounded, by the definition of weak convergence we have
\[
\lim_{i\to\infty} \int_{X \times Y} c d\gamma_i = \int_{X \times Y} c d\gamma.
\]

Clearly for any $j$, $X \times\{y_j\}$ has empty boundary relative to $X\times Y$. Hence it is a $\gamma$-continuity set and so by the Portmanteau theorem (\cite[Theorem 2.1]{Billingsley99}), 
\begin{align}\label{eqn: right marginal converges}
\limi \weightvect_i^j =\limi \gamma_i(X \times\{y_j\}) = \gamma(X \times\{y_j\})=\weightvect^j.
\end{align}
Since $F$ is lower semicontinuous, we obtain
\[
\int_{X \times Y} c d\gamma + F(\weightvect)\leq \liminf_{i\to \infty}(\int_{X \times Y} c d\gamma_i + F(\weightvect_i))
\]
as desired. 

\end{proof}

\section{The dual problem}\label{section: dual problem}
Our first goal in this section will be to deduce a dual problem associated to our primal problem \eqref{eqn: primal problem}. This problem will be in a similar vein to Kantorovich's dual problem for the classical optimal transport problem as seen for example in \cite[Theorem 1.3]{Villani03}, and the proof will be along similar lines.

As a reminder, for the remainder of the paper, we will assume that $F$ is a proper, closed, convex function, with $\dom(F)\subset \weightvectset$.
\subsection{Strong duality}
In order to state the dual problem, we first recall a basic concept from convex analysis.
\begin{defin}\label{def: legendre transform}
Let $E$ be a Banach space.  If $F: E\to \R\cup \{+\infty\}$ is a proper function (i.e., it is not identically $+\infty$), its \emph{Legendre-Fenchel transform} is the (proper, convex) function $F^*: E^*\to \R\cup \{+\infty\}$ defined for any $y\in E^*$ by
\begin{align*}
 F^*(y):=\sup_{x\in E}(\inner{y}{x}-F(x)),
\end{align*}
where $\inner{y}{x}$ is the duality pairing between elements of $E^*$ and $E$.

If $E=E^*=\R^n$, then $F^*$ is called the \emph{Legendre transform}.
\end{defin}
\begin{rmk}\label{rmk: legendre transform properties}
Since $\weightvectset$ is compact, we see that $F$ is bounded from below everywhere, as any affine function supporting $F$ from below will be bounded on $\dom(F)$. Thus, we see that $F^*$ is actually finitely valued everywhere on $\R^N$ by the definition of Legendre transform.
\end{rmk}
\begin{thm}[Strong duality]\label{thm: strong duality}
There is strong duality, i.e. 
\begin{align}
&\min_{{\weightvect}\in \weightvectset,\ \gamma\in \Pi(\mu, \nu_{\weightvect})}\int_{X\times Y} c d\gamma+F({\weightvect})\notag\\
&=\sup\{-\int_X \varphi \dmu - F^*({\psi})\mid (\varphi, \psi)\in C(X)\times \R^N,\  -\varphi(x) - \psi^j \leq c(x,y_j),\ \forall (x, y_j)\in X\times Y\}\label{eqn: dual problem}.
\end{align}
\end{thm}
\begin{proof}
Let $E = C(X \times Y)$ and note its dual is given by $E^* = \Meas(X \times Y)$, the space of Radon measures on $X\times Y$. Then define $\Theta$,  $\Xi: E \to \R\cup \{+\infty\}$ by 
\begin{align*}
\Theta(u) :=
\begin{cases}
0, & u(x,y) \geq -c(x,y),\quad  \forall(x, y)\in X\times Y\\
+\infty, & \text{ else }
\end{cases}
\end{align*}
and
\begin{align*}
\Xi(u) :=
\begin{cases}
-\int_X \varphi \dmu + F^*({-\psi}), & \exists (\varphi, \psi)\in C(X)\times \R^N\ s.t.\ u(x, y_j) = -\varphi(x) - \psi^j,\  \forall (x, y_j)\in X\times Y,\\
+\infty, & \text{ else},
\end{cases}
\end{align*}
(we will write $u=-\varphi-{\psi}$ as shorthand for the condition in the first case of $\Xi$ above). We can see that $\Xi$ as above is well-defined. Indeed, if $u(x, y_j)=-\phi_1(x)-\psi_1^j=-\phi_2(x)-\psi_2^j$ for all $(x, y_j)\in X\times Y$, we can see there exists some $r\in \R$ such that $\phi_1=-r+\phi_2$, and $\psi_1=\psi_2+r\onevect$. Since $\weightvectset$ is contained in a plane orthogonal to $\onevect$ and $F=\infty$ outside of this plane, a direct verification of the definition implies that for any $\weightvect\in \dom(F)$, $\psi\in \subdiff{F}{\weightvect}$, and $r\in \R$, we must have $\psi+r\onevect\in \subdiff{F}{\weightvect}$ as well. By Remark \ref{rmk: legendre transform properties}, we see $F^*$ is finite everywhere, hence $\subdiff{F^*}{\psi}\neq \emptyset$ for any $\psi$. Thus there exists some $\weightvect$ which must be in $\weightvectset$, such that $\lambda\in \subdiff{F^*}{-\psi_2}$, hence by \cite[Theorem 23.5]{Rockafellar70},
\begin{align}\label{eqn: invariance of dual}
F^*(-\psi_1)&= F^*(-\psi_2-r\onevect)=-\inner{\psi_2}{\weightvect}-r\inner{\onevect}{\weightvect}-F(\weightvect)=-r+F^*(-\psi_2).
\end{align} 
Since $\mu$ is a probability measure, this shows $\Xi$ is well-defined. 
It is immediate to see that $\Theta$ and $\Xi$ are convex, and note for $u \equiv 1$, $\Theta(u)$, $\Xi(u) < +\infty$ and $\Theta$ is continuous at $u$. 

We now compute
\begin{align*}
\Theta^*(-\gamma) 
= \sup_{u\in E} \( -\int_{X\times Y} u \dga - \Theta(u) \)
= \sup_{u \geq -c} \( -\int_{X\times Y} u \dga \)
= \begin{cases}
\int_{X\times Y} c \dga, & \gamma \geq 0 \\
+\infty, & \text{ else }
\end{cases}
\end{align*}
and
\begin{align*}
\Xi^*(\gamma)
&= \sup_{u\in E} \( \int_{X\times Y} u \dga - \Xi(u) \) = \sup_{u=-\varphi - {\psi}} \( \int_{X\times Y}u \dga + \int_X \varphi \dmu - F^*({-\psi}) \) \\
&= \sup_{(\varphi, {\psi})\in C(X)\times \R^N} \( \int_{X\times Y} (-\varphi(x) - {\psi}) \dga + \int_X \varphi \dmu - F^*({-\psi}) \) \\
&= \sup_{(\varphi, {\psi})\in C(X)\times \R^N}\( \int_{X} -\varphi d((\pi_X)_\# \gamma - \mu) -\int_Y {\psi} d(\pi_Y)_\#\gamma - F^*({-\psi}) \) \\
&= \begin{cases}
\displaystyle\sup_{{\psi}} \( \inner{{-\psi}}{{\weightvect}} - F^*({-\psi})\), & (\pi_X)_\# \gamma = \mu,\ (\pi_Y)_\# \gamma = \nu_\weightvect \\
+\infty, & \text{ else }
\end{cases} \\
&= \begin{cases}
F^{**}({\weightvect}), & (\pi_X)_\# \gamma = \mu,\ (\pi_Y)_\# \gamma = \nu_\weightvect \\
+\infty, & \text{ else},
\end{cases}\\
&= \begin{cases}
F({\weightvect}), & (\pi_X)_\# \gamma = \mu,\ (\pi_Y)_\# \gamma = \nu_\weightvect \\
+\infty, & \text{ else},
\end{cases}
\end{align*}
where the last equality above is by convexity of $F$.

Next we find (where by an abuse of notation we will write $-\varphi-\psi\leq c$ to denote $-\varphi(x)-\psi^j\leq c(x, y_j)$ for all $x\in X$ and $1\leq j\leq N$)
\begin{align*}
\inf_{z \in E} (\Theta(z) + \Xi(z)) 
&= \inf_{\varphi + \psi \leq c} \(-\int_X \varphi \dmu + F^*({-\psi})\) \\
&= -\sup_{\varphi + \psi \leq c}\( \int_X \varphi \dmu - F^*({-\psi})\)\\
&= -\sup_{-\varphi - \psi \leq c} \(\int_X (-\varphi) \dmu - F^*({\psi})\) \\
\end{align*}

Hence by the Fenchel-Rockafellar theorem (see \cite[Theorem 1.9]{Villani03}) we have
\begin{align*}
\sup_{-\varphi - \psi \leq c}\( -\int_X \varphi \dmu - F^*({\psi})\)&=-\inf_{z \in E} (\Theta(z) + \Xi(z))\\
&= -\max_{\gamma \in E^*} (-\Theta^*(-\gamma) - \Xi^*(\gamma)) \\
&= \min_{\gamma \in E^*} (\Theta^*(-\gamma) + \Xi^*(\gamma)) \\
&= \min_{{\weightvect}\in \weightvectset,\ \gamma\in \Pi(\mu, \nu_{\weightvect})} \(\int_{X\times Y} c \dga + F( \weightvect) \)
\end{align*}
proving the claimed strong duality.
\end{proof}
\subsection{Existence of dual maximizers}
We will now show the existence of maximizers for the dual problem \eqref{eqn: dual problem}. It is convenient at this point to introduce the notion of $c$ and $c^*$-transforms, and $c$-convexity. Note carefully, since we are in the semi-discrete case the $c^*$-transform of a function defined on $X$ will be a vector in $\R^N$, while the $c$-transform of a vector in $\R^N$ will be a function whose domain is $X$.
\begin{defin}\label{def: c-transforms}
	If $\varphi: X\to \R\cup \{+\infty\}$ (which is not identically $+\infty$) and $\psi\in \R^N$, their $c$- and $c^*$-transforms are a vector $\varphi^c\in \R^N$ and a function $\psi^{c^*}: X\to \R\cup \{+\infty\}$ respectively, defined by
	\begin{align*}
	(\varphi^c)^j:=\sup_{x\in X}(-c(x, y_j)-\varphi(x)),\quad (\psi^{c^*})(x):=\max_{1\leq j\leq N}(-c(x, y_j)-\psi^j).
	\end{align*}
	
	If $\varphi: X\to \R\cup \{+\infty\}$ is the $c^*$-transform of some vector in $\R^N$, we say \emph{$\varphi$ is a $c$-convex function}. 
	We say a pair $(\varphi, \psi)$ with $\varphi: X\to \R\cup \{+\infty\}$ and $\psi\in R^N$ is a \emph{$c$-conjugate pair} if $\varphi=\psi^{c^*}$ and $\psi=\psi^{c^*c}$.
\end{defin}

Note just from the definition, if $-\varphi-\psi\leq c$, then $-\varphi(x)\leq -\psi^{c^*}(x)$ and $-\psi^j\leq -(\varphi^c)^j$ for all $x\in X$ and $1\leq j\leq N$, while $-\varphi-(\varphi^c)\leq c$, $-(\psi^{c^*})-\psi\leq c$ always holds.

As in the classical optimal transport case (see, for example \cite[Proposition 1.11]{Santambrogio15}), we utilize the $c$- and $c^*$-transforms of functions to obtain compactness.
\begin{prop}\label{prop: dual max exists}
There exists at least one maximizer of the dual problem \eqref{eqn: dual problem} that is a $c$-conjugate pair. 
Moreover, if $(\varphi, \psi)$ is any maximizing pair in \eqref{eqn: dual problem}, then it must be that $\varphi\equiv \psi^{c^*}$ on $X$.
\end{prop}
\begin{proof}
Let $(\varphi_n, \psi_n)$ be an admissible, maximizing sequence for \eqref{eqn: dual problem}. We may assume $\varphi_n=\psi_n^{c^*}$ for this sequence as $-\int_X\varphi_nd\mu\leq -\int_X\psi_n^{c^*}d\mu$, and $\psi_n=\psi_n^{c^*c}$ as 
\begin{align*}
-F^*(\psi_n)=\inf_{\weightvect\in \dom(F)}(\inner{\weightvect}{-\psi_n}+F(\weightvect))\leq \inf_{\weightvect\in \dom(F)}(\inner{\weightvect}{-(\psi_n^{c^*c})}+F(\weightvect))=-F^*(\psi_n^{c^*c}),
\end{align*}
using the fact that $\weightvect^j\geq 0$ for all $\weightvect\in \dom(F)$ and $-\psi_n\leq -\psi_n^{c^*c}$ componentwise. 
Since $(\psi_n+r\onevect)^{c^*}(x)=\psi_n^{c^*}(x)-r$ for any $r$, the above along with \eqref{eqn: invariance of dual} implies that replacing $\psi_n$ by $(\psi_n+r\onevect)^{c^*c}$ and taking $\varphi_n=(\psi_n+r\onevect)^{c^*}$ does not change the values of $-\int_X\varphi_nd\mu-F^*(\psi_n)$. Hence just as in the proof of \cite[Proposition 1.11]{Santambrogio15} there exists a subsequence, that we do not relabel, of $(\varphi_n, \psi_n)$ that converges ($\varphi_n$ uniformly on $X$ and $\psi_n$ in $\R^N$) to some $(\varphi, \psi)$. Since $-F^*$ is a concave function, finite on all of $\R^N$ by compactness of $\dom(F)$, it is continuous on $\R^N$, hence we obtain that $(\varphi, \psi)$ is a maximizer in \eqref{eqn: dual problem}. We can replace the pair by $(\psi^{c^*}, \psi^{c^*c})$ which only increases the value of the associated functional, hence there exists at least one $c$-conjugate maximizing pair.

	Finally let $(\varphi, \psi)$ be a maximizing pair. Recall $-\varphi\leq -\psi^{c^*}$ on $X$, if the inequality is strict anywhere, by continuity, strict inequality holds on a neighborhood relatively open in $X$. Since $X = \spt \mu$ we would have $-\int_X\varphi\dmu<-\int_X\psi^{c^*}\dmu$, contradicting that $(\varphi, \psi)$ is a maximizing pair. Thus we must have $\varphi\equiv \psi^{c^*}$.
\end{proof}

\section{Relationships between dual and primal optimizers} \label{section: dual characterization}
In this section we first show various properties of maximizers of the dual problem \eqref{eqn: dual problem}, followed by relationships between these maximizers and minimizers of the primal problem \eqref{eqn: primal problem}. As a consequence, we will obtain a way to characterize optimizers in both problems, along with uniqueness of minimizers under some mild conditions.

\subsection{Dual maximizers from primal minimizers}
In this subsection, we start with a minimizer in the primal problem and show how it relates to maximizers of the dual problem.
\begin{prop}\label{prop: primal v are dual max}
 Suppose $(\ti \gamma, \ti \weightvect)\in  \Pi(\mu, \nu_{\ti \weightvect})\times \weightvectset$ are a minimizing pair in the primal problem \eqref{eqn: primal problem}. Then for any $(\hat\varphi, \hat \psi)\in C(X)\times \R^N$ which are maximizers in the dual problem \eqref{eqn: dual problem}, we must have 
\begin{align*}
 -F^*(\hat\psi)=-\inner{\tilde \weightvect}{\hat \psi}+F(\tilde \weightvect).
\end{align*}
Additionally if $\tilde \weightvect^j>0$ for some $1\leq j\leq N$, we must have $\(\hat\psi^{c^*c}\)^j=\hat \psi^j$ for that index $j$.
\end{prop}

\begin{proof}
Let $(\ti \gamma, \ti \weightvect)$ be a minimizing pair in the primal problem \eqref{eqn: primal problem} and  $(\hat\varphi, \hat \psi)\in C(X)\times \R^N$ be a maximizing pair in the dual problem \eqref{eqn: dual problem}. By definition of the Legendre transform, we have
\begin{align*}
 -F^*(\hat\psi)=\inf_{\weightvect\in \weightvectset}(-\inner{\weightvect}{ \hat\psi} + F(\weightvect))\leq-\inner{\tilde \weightvect}{\hat \psi}+F(\tilde \weightvect).
\end{align*}

For the opposite inequality, first by Theorem \ref{thm: strong duality} we have
\begin{align}
\int_{X \times Y} c d\ti \gamma + F(\ti \weightvect) &=\sup_{-\varphi-\psi\leq c} \(-\int_X \varphi \dmu -F^*(\psi)\)\notag\\
&= \sup_{-\varphi-\psi\leq c} \(-\int_X \varphi \dmu + \inf_{\weightvect\in \weightvectset}(-\inner{\weightvect}{\psi} + F(\weightvect))\)\notag \\
&\leq \sup_{-\varphi-\psi\leq c} \(-\int_X \varphi \dmu -\inner{\ti \weightvect}{\psi}\) + F(\ti \weightvect).\label{eqn: primal to dual 1}
\end{align}
As from the discussion in the introduction, $\tilde \gamma$ is a minimizer in the classical Kantorovich problem \eqref{eqn: kantorovich} with the right marginal equal to $\nu_{\tilde{\weightvect}}$. Thus by Kantorovich duality in the classical optimal transport problem (\cite[Theorem 1.3]{Villani03}) we have $\sup_{-\varphi-\psi\leq c} \(-\int_X \varphi \dmu - \inner{\ti \weightvect }{ \psi}\)=\int_{X \times Y} c d\ti \gamma$.
%
%
In particular, the inequality in the middle of the calculation leading to \eqref{eqn: primal to dual 1} above is an equality, thus we have
\begin{align*}
- \int_X\hat \varphi \dmu-F^*(\hat \psi)&=\sup_{-\varphi-\psi\leq c} \(-\int_X \varphi \dmu -F^*(\psi)\)=\sup_{-\varphi-\psi\leq c} \(-\int_X \varphi \dmu -\inner{ \ti \weightvect }{ \psi} + F(\ti \weightvect)\)\\
 &\geq -\int_X\hat\varphi\dmu-\inner{\ti \weightvect}{ \hat \psi}+F(\ti \weightvect),
\end{align*}
finishing the first claim of the proof.

Now suppose $\tilde \weightvect^{ j}>0$ for some $1\leq  j\leq N$. Recall by Proposition \ref{prop: dual max exists}, we must have $\hat \varphi\equiv \hat\psi^{c^*}$, and we also have $\hat\psi^k\leq (\hat\psi^{c^*c})^k$ for all $1\leq k\leq N$; suppose by contradiction there is a strict inequality for the index $j$. Since $(\hat\psi^{c^*}, \hat\psi^{c^*c})$ is also a maximizer in \eqref{eqn: dual problem} we would then obtain
\begin{align*}
 -F^*(\hat\psi^{c^*c})&=-\inner{\ti \weightvect}{ \hat\psi^{c^*c}}+F(\ti \weightvect)=-\sum_{k\neq j}(\hat\psi^{c^*c})^k\ti\weightvect^k-(\hat\psi^{c^*c})^j\ti\weightvect^j+F(\ti \weightvect)\\
 &>-\sum_{k\neq j}(\hat\psi)^k\ti\weightvect^k-(\hat\psi)^j\ti\weightvect^j+F(\ti \weightvect)=-\inner{\ti \weightvect}{ \hat \psi}+F(\ti \weightvect)=-F^*(\hat \psi).
\end{align*}
However, this contradicts that $(\hat \varphi, \hat \psi)$ is a maximizer, thus we must have $\hat\psi= (\hat\psi^{c^*c})$.
\end{proof}

\subsection{Primal minimizers from dual maximizers}\label{subsection: primal from dual}
Next we aim to start with a maximizer in the dual problem and obtain a minimizer in the primal problem. In order to do so, we will have to add standard conditions under which a solution to the classical Kantorovich problem \eqref{eqn: kantorovich} can actually be written as solutions to the Monge problem \eqref{eqn: monge}. From this point on, we assume that for each $1\leq j\leq N$, the function $c(\cdot, y_j)\in C^1(X)$.
\begin{defin}\label{def: twist}
We say that the cost function $c$ satisfies the \emph{twist} condition if for each $x_0\in X$ and $j\neq k$, we have
\begin{align}\label{eqn: twist}
-\nabla_x c(x_0, y_j)\neq -\nabla_x c(x_0, y_k). 
\end{align}
\end{defin}
\begin{rmk}\label{rmk: brenier theorem}
By the generalized Brenier's theorem \cite[Theorem 10.28]{Villani09}, if $c$ satisfies the twist condition and $\mu$ is absolutely continuous with respect to Lebesgue measure, any solution of the Kantorovich problem \eqref{eqn: kantorovich} can be written in the form $(\Id\times T)_\#\mu$ where $T$ is a mapping defined $\mu$-a.e. that is a solution to the Monge problem \eqref{eqn: monge}. In particular, under these conditions, a solution $\gamma$ of \eqref{eqn: kantorovich} must be supported on the graph of a mapping from $X$ to $Y$ that is single valued $\mu$-a.e..

Now let $\psi\in \R^N$ and define $\weightvect\in \R^N$ by $\weightvect^j=\mu\(\{x\in X\mid  \psi^{c^*}(x)=-c(x, y_j)-\psi^j\}\)$ for each $j$. When $c$ satisfies the twist condition and $\mu$ is absolutely continuous with respect to Lebesgue measure, it can be seen by the implicit function theorem that $\weightvect\in \weightvectset$, and for $\mu$-a.e. $x$ there is a unique index $j$ such that $\psi^{c^*}(x)=-c(x, y_j)-\psi^j$; define $T_\psi: X\to Y$ by $T_\psi(x)=y_j$ whenever $j$ is the unique index associated to $x$. It is clear that $(T_\psi)_\#\mu=\nu_\weightvect$, hence by \cite[Remark 5.13]{Villani09}, we can see that $\gamma_\weightvect:=(\Id \times T_\psi)_\#\mu$ is a solution to the Kantorovich problem \eqref{eqn: kantorovich} with $\nu=\nu_\weightvect$.
%
\end{rmk}
\begin{prop}\label{prop: primal from dual}
Suppose $c$ satisfies the twist condition \eqref{eqn: twist}, and $\mu$ is absolutely continuous with respect to Lebesgue measure. Let $(\conj \varphi, \conj \psi)$ be a maximizing pair in the dual problem and define $\ti \weightvect\in \weightvectset$ by 
\begin{align*}
\ti \weightvect^j:=\mu\(\{x\in X\mid \conj \varphi(x)=-c(x, y_j)-\conj\psi^j\}\),
\end{align*}
and take $\ti \gamma\in \Pi(\mu, \nu_{\ti \weightvect})$ to be the solution of the classical Kantorovich problem \eqref{eqn: kantorovich} with $\nu=\nu_{\ti \weightvect}$. Then $(\ti \gamma, \ti \weightvect)$ is a minimizing pair in the primal problem \eqref{eqn: primal problem}
\end{prop}
\begin{proof}
Let $(\conj \varphi, \conj \psi)$ be a maximizing pair in the dual problem. By Proposition \ref{prop: dual max exists} we see that $\conj \varphi\equiv \conj \psi^{c^*}$, and we easily see that replacing $\conj \psi$ with $\conj \psi^{c^*c}$ does not change the vector $\ti \weightvect$, so we make this replacement.

Since $-\conj\varphi(x)-\conj\psi^j\leq c(x, y_j)$ for all $x$, $j$, by Kantorovich duality in the classical optimal transport problem (\cite[Theorem 1.3]{Villani03}), we have for any fixed $\weightvect\in \weightvectset$ that 
\begin{align*}
\ell(\weightvect):= -\int_X \conj\varphi \dmu-\inner{\weightvect}{\conj\psi}\leq\min_{\gamma \in \Pi(\mu, \nu_{\weightvect})} \int_{X \times Y} c \dga=:\mathcal{C}(\weightvect).
\end{align*}
At the same time by strong duality, Theorem \ref{thm: strong duality},
\begin{align*}
   \inf_{\weightvect\in \weightvectset}[F(\weightvect)+\ell(\weightvect)]&=   -\int_X \conj\varphi \dmu+\inf_{\weightvect\in \weightvectset}[F(\weightvect)-\inner{\weightvect}{\conj\psi}]=-\int_X \conj\varphi \dmu - F^*(\conj\psi)\\
&= \min_{\weightvect\in \weightvectset,\ \gamma\in \Pi(\mu, \nu_\weightvect)} (\int_{X \times Y} c \dga + F(\weightvect))
=\min_{\weightvect\in \weightvectset}[F(\weightvect)+\mathcal{C}(\weightvect) ].
\end{align*}
Thus we obtain that $F+\ell\leq F+\mathcal{C}$ pointwise everywhere on $\weightvectset$, and the above calculation shows that $F+\ell$ attains its minimum value over $\weightvectset$, at the same point as $F+\mathcal{C}$; say this point is $\weightvect_{\min}$.

We now claim that $\mathcal{C}$ is strictly convex on $\weightvectset$. Let $\weightvect_1$, $\weightvect_2\in \weightvectset$, $t\in [0, 1]$, and let $\gamma_{\weightvect_1}\in \Pi(\mu, \nu_{\weightvect_1})$, $\gamma_{\weightvect_2}\in \Pi(\mu, \nu_{\weightvect_2})$ be optimal in the minimum defining $\mathcal{C}(\weightvect_1)$ and $\mathcal{C}(\weightvect_2)$ respectively. Then 
\begin{align*}
 \mathcal{C}((1-t)\weightvect_1+t\weightvect_2)&\leq \int_{X\times Y} cd((1-t)\gamma_{\weightvect_1}+t\gamma_{\weightvect_2})=(1-t)\mathcal{C}(\weightvect_1)+t\mathcal{C}(\weightvect_2)
\end{align*}
which shows that $\mathcal{C}$ is convex on $\weightvectset$. Now suppose that we had equality in the above expression for some $t \in (0,1)$. This would mean that the measure achieving the minimum in $\mathcal{C}(t\weightvect_1 + (1-t)\weightvect_2)$ is $\gamma_t:=t\gamma_{\weightvect_1} + (1-t)\gamma_{\weightvect_2}$. By Remark \ref{rmk: brenier theorem}, we see $\gamma_{\weightvect_1}=(\Id \times T_{\weightvect_1})_{\#} \mu$ and $\gamma_{\weightvect_2}=(\Id \times T_{\weightvect_2})_{\#} \mu$ for some mappings $T_{\weightvect_1}$ and $T_{\weightvect_2}: X\to Y$, which are single valued $\mu$-a.e.. However, if $\gamma_{\weightvect_1}\neq \gamma_{\weightvect_2}$, this would imply $T_{\weightvect_1}\neq T_{\weightvect_2}$ on a set of nonzero $\mu$-measure. Clearly we must have that $\gamma_t$ is supported on the union of the graphs of $T_{\weightvect_1}$ and $T_{\weightvect_2}$, this leads to a contradiction since again by Remark \ref{rmk: brenier theorem}, $\gamma_t$ must be supported on the graph of a $\mu$-a.e. single valued mapping. Therefore $\mathcal{C}$ must actually be strictly convex on $\weightvectset$.

By Remark \ref{rmk: brenier theorem} and the choice of $\ti \weightvect$, we have $\mathcal{C}(\ti \weightvect)=\ell(\ti \weightvect)$, hence for any $t\in [0, 1]$ we must have 
\begin{align*}
\ell((1-t)\weightvect_{\min}+t\ti \weightvect)&\leq \mathcal{C}((1-t)\weightvect_{\min}+t\ti \weightvect)\leq (1-t)\mathcal{C}(\weightvect_{\min})+t\mathcal{C}(\ti \weightvect)\\
 &=(1-t)\ell(\weightvect_{\min})+t\ell(\ti \weightvect)=\ell((1-t)\weightvect_{\min}+t\ti \weightvect),
\end{align*}
i.e., $\ell\equiv \mathcal{C}$ on the segment $[\weightvect_{\min}, \ti \weightvect]$.

However recall that $\mathcal{C}$ is strictly convex. The only way for a strictly convex to equal a affine function on $[\weightvect_{\min}, \ti \weightvect]$ is if $\weightvect_{\min} = \ti \weightvect$. It is then clear that $(\ti \gamma, \ti \weightvect)$ is a minimizer in the primal problem \eqref{eqn: primal problem}.  
\end{proof}
The above proof also immediately yields the following corollary.
\begin{cor}\label{cor: unique minimizer}
 If  $\mu$ is absolutely continuous and $c$ satisfies the twist condition \eqref{eqn: twist}, minimizers in the primal problem \eqref{eqn: primal problem} are unique.
\end{cor}

\subsection{Characterization of optimizers}\label{subsec: characterization}
Using the above properties of dual and primal optimizers, we obtain a characterization for optimizers in both problems.
\begin{defin}\label{def: subdifferential}
 If $F: \R^n\to \R\cup \{+\infty\}$ is a convex function, its \emph{subdifferential} at a point $x\in \R^n$ is defined as
\begin{align*}
 \subdiff{F}{x}:=\{p\in \R^n\mid F(y)\geq F(x)+\inner{y-x}{p},\forall y\in \R^n\}
\end{align*}
\end{defin}

\begin{thm}\label{thm: conditions for optimizers}
Assume $\mu$ is absolutely continuous with respect to Lebesgue measure and $c$ satisfies the twist condition \eqref{eqn: twist}.

If $(\hat \phi, \hat \psi)$ is a maximizing pair in the dual problem \eqref{eqn: dual problem} and $(\ti \gamma, \ti \weightvect)$ is a minimizer in the primal problem \eqref{eqn: primal problem}, $\hat \psi$ and $\ti \weightvect$ satisfy the conditions (i) and (ii) below, 
\begin{enumerate}[{(i)}]
 \item $ \hat\psi \in \partial F(\ti \weightvect)$
  \item $\ti \weightvect^j = \mu\(\{x\in X \mid -c(x,y_j) - \hat \psi^j =  \hat\psi^{c^*}(x)\}\)$.
\end{enumerate}
Furthermore, if $\ti \weightvect^j>0$ for some $1\leq j\leq N$, we have
\begin{enumerate}[{(iii)}]
 \item $\hat \psi^j = \(\hat \psi^{c^*c}\)^j$.
\end{enumerate}
Conversely, if $\ti\weightvect\in \weightvectset$ and $\hat \psi \in \R^N$ are such that conditions (i) and (ii) above hold, then defining $T_{\hat \psi}$  as in Remark \ref{rmk: brenier theorem}, the pairs $(\hat \psi^{c^*}, \hat \psi)$ and $((\Id\times T_{\hat \psi})_\#\mu, \ti\weightvect)$ are maximizing and minimizing pairs in the dual and primal problem respectively.
\end{thm}

\begin{proof}
The first claims above follow immediately from Proposition \ref{prop: primal v are dual max} combined with \cite[Theorem 23.5]{Rockafellar70}, and Proposition \ref{prop: primal from dual} combined with the uniqueness of minimizers from Corollary \ref{cor: unique minimizer}.

Now suppose $\hat \psi\in \R^N$, $\ti \weightvect\in \weightvectset$ satisfy (i) and (ii). We have
\begin{align}
\sup_{-\varphi-\psi\leq c} \(-\int_X \varphi \dmu - F^*(\psi)\)
&\geq -\int_X \hat \psi^{c^*} \dmu - F^*(\hat \psi)= -\int_X \hat \psi^{c^*} \dmu - \inner{\ti \weightvect }{ \hat \psi} + F(\ti \weightvect)\label{eqn: characterization 1}
\end{align}
where this last equality is from condition (i) and \cite[Theorem 23.5]{Rockafellar70} again. Let $T_{\hat \psi}$ be defined as in Remark \ref{rmk: brenier theorem}, by condition (ii), we see that $\ti \gamma:=(\Id\times T_{\hat \psi})_\#\mu$ is a minimizer in the classical Kantorovich problem \eqref{eqn: kantorovich} with $\nu=\nu_{\tilde \weightvect}$. Let $x\in X$ be such that $T(x)$ is well-defined. By definition, this means that  $\hat \psi^{c^*}(x)+\hat \psi^j=-c(x, y_j)$ where $y_j=T(x)$. Since (see Remark \ref{rmk: brenier theorem}) the set of such $x$ has full $\mu$ measure, the union of $(x, T(x))$ over such $x$ has full $\ti \gamma$ measure. Thus by \cite[Theorem 5.10 and Remark 5.13]{Villani09}, we have that $(\hat \psi^{c^*}, \hat \psi)$ is a maximizer in the classical Kantorovich dual problem, and in particular $-\int_X \hat \psi^{c^*} \dmu - \inner{\ti \weightvect}{\hat \psi} = \inf_{\gamma\in \Pi(\mu, \nu_{\tilde{\weightvect}})}\( \int_{X \times Y} c \dga\)$. Thus we can calculate,
\[
-\int_X \hat \psi^{c^*} \dmu - \inner{\ti \weightvect}{\hat \psi} + F(\ti \weightvect)
&= \inf_{\gamma\in \Pi(\mu, \nu_{\tilde{\weightvect}})}\( \int_{X \times Y} c \dga\) + F(\ti \weightvect) \geq \inf_{\weightvect\in \weightvectset,\ \gamma\in\Pi(\mu, \nu_\weightvect)} \(\int_{X \times Y} c \dga + F(\weightvect)\)  \\
&= \sup_{-\varphi-\psi\leq c} \(-\int_X \varphi \dmu - F^*(\psi)\)
\]
with this last equality from by Theorem \ref{thm: strong duality}.
Combining this with \eqref{eqn: characterization 1}, we see
\[
\int_{X \times Y} c d \ti \gamma + F(\ti \weightvect) =\inf_{\gamma\in \Pi(\mu, \nu_{\tilde{\weightvect}})}\( \int_{X \times Y} c \dga\) + F(\ti \weightvect) 
= \inf_{\weightvect\in \weightvectset,\ \gamma\in\Pi(\mu, \nu_\weightvect)} \(\int_{X \times Y} c \dga + F(\weightvect)\)
\]
 hence $(\ti \gamma, \ti \weightvect)$ is a minimizing pair in the primal problem. The above calculations also yield

\[
\sup_{-\varphi-\psi\leq c} \(-\int_X \varphi \dmu - F^*(\psi)\) 
=- \int_X \hat \psi^{c^*} \dmu - F^*(\hat \psi), 
\]
thus $(\hat \psi^{c^*}, \hat \psi)$ is a maximizing pair in the dual problem. 
\end{proof}

\section{Stability of $F$}\label{section: approximation}
In this section we show the stability of minimizers to our primal problem \eqref{eqn: primal problem}, under perturbations of the storage fee function $F$. First we estimate the change in the minimum value of the problem.
\begin{prop}\label{eqn: bound on minimum value difference}
Let $F_1$ and $F_2: \R^N\to \R\cup \{+\infty\}$ be lower semi-continuous and proper, and write $\mathfrak{m}_{F_i}$ for the minimum value attained in \eqref{eqn: monge ver} with some fixed measure $\mu$ and the choice $F=F_i$, $i=1$ or $i=2$. Then
 \[
\abs{\mathfrak{m}_{F_1}-\mathfrak{m}_{F_2}}\leq \norm{F_1 - F_2}_{L^\infty(\dom(F_1)\cup \dom(F_2))}
\]
\end{prop}

\begin{proof}
Let the pair $(\ti \gamma_2, \ti \weightvect_2)$ achieve the minimum value in $\mathfrak{m}_{F_2}$ (in particular, $F_2(\ti \weightvect_2)$ is finite). If $F_1(\ti \weightvect_2)=+\infty$, we would have $\norm{F_1-F_2}_{L^\infty(\dom(F_1)\cup \dom(F_2))} = +\infty$ hence the claim is trivial. Thus we may assume $F_1(\ti \weightvect_2)$ is finite, then,
\begin{align*}
 \mathfrak{m}_{F_1}-\mathfrak{m}_{F_2}&\leq \(\(\int_{X \times Y} c d\ti \gamma_2 + F_1(\ti \weightvect_2)\) - \(\int_{X \times Y} c d\ti \gamma_2 + F_2(\ti \weightvect_2)\)\)\\
 &=F_1(\ti \weightvect_2)-F_2(\ti \weightvect_2)\leq \norm{F_1-F_2}_{L^\infty(\dom(F_1)\cup \dom(F_2))}.
\end{align*}
The same argument reversing the roles of $F_1$ and $F_2$ finish the proof.
%
%
%
%
%
%
%
%
%
\end{proof}

The above shows that if $F_i$ converges to $F$ uniformly, then $\mathfrak{m}_{F_i}$ converges to $\mathfrak{m}_F$. Next we will prove that the minimizing plans weakly converge to a minimizer of the original problem.

\begin{thm}
Let $(\ti \gamma, \ti \weightvect)$ minimize $\mathcal{C}_F$ and $(\ti \gamma_j, \ti \weightvect_j)$ minimize $\mathcal{C}_{F_j}$ for each $j$, where $F$, $F_j$ are all proper, convex functions with compact essential domains contained in $\weightvectset$. If $$\lim_{j\to\infty}\norm{F_j - F}_{L^\infty(\dom(F_j)\cup \dom(F))}=0,$$ then $\ti\weightvect_j$ converges to $\ti\weightvect$, and $\ti \gamma_j$ converges weakly to $\ti \gamma$.
\end{thm}

\begin{proof}
Let $\mathcal{C}_F(\weightvect) = \inf_{ \gamma\in \Pi(\mu, \nu_\weightvect)} \int_{X \times Y} c \dga + F(\weightvect)$ and $\mathcal{C}_{F_j}(\weightvect)$ defined analogously. By the proof of Proposition \ref{prop: primal from dual} and Corollary \ref{cor: unique minimizer}, we see that $\mathcal{C}_F$ and $\mathcal{C}_{F_j}$ are convex functions on $\weightvectset$ each of which have unique minimizers, given by $\ti\gamma$ and $\ti\gamma_j$ respectively. Then by Lemma \ref{eqn: bound on minimum value difference},
\begin{align*}
 \abs{\mathcal{C}_{F}(\ti\weightvect_j)-\mathcal{C}_F(\ti\weightvect)}&\leq  \abs{\mathcal{C}_{F_j}(\ti\weightvect_j)-\mathcal{C}_F(\ti\weightvect)}+ \abs{\mathcal{C}_{F_j}(\ti\weightvect_j)-\mathcal{C}_F(\ti\weightvect_j)}=\abs{\mathfrak{m}_{F_j}-\mathfrak{m}_{F}}+\abs{F_j(\ti\weightvect_j)-F(\ti\weightvect_j)}\\
 &\leq 2\norm{F_j - F}_{L^\infty(\dom(F_j)\cup \dom(F))}\to 0,\ j\to\infty.
\end{align*}
By compactness of $\weightvectset$, any subsequence of $\{\ti\weightvect_j\}_{j=1}^\infty$ has a convergent subsequence, by the above calculation and strict convexity of $\mathcal{C}_F$ on $\weightvectset$ all of these subsequential limits must be $\ti\weightvect$, hence we must have $\lim_{j\to\infty} \ti\weightvect_j=\ti\weightvect$.

Now suppose by contradiction that $\ti\gamma_j$ does not converge weakly to $\ti\gamma$. Since $\Gamma_\mu$ is weakly compact by Lemma \ref{lem: tight}, we can extract a subsequence (which we do not relabel) which converges weakly to some limiting measure that is not $\ti\gamma$, say $\hat \gamma$. By the above paragraph combined with \eqref{eqn: right marginal converges} we have $ \ti\weightvect = \lim_{j\to\infty} \ti\weightvect_j= \hat \weightvect$ where $\hat \weightvect$ is such that the right marginal of $\hat \gamma$ is $\nu_{\hat \weightvect}$. We then have
\[
\int c d\hat \gamma + F(\hat \weightvect) 
&= \int c d\ti \gamma_j + F_j(\ti \lambda_j) + \(F(\hat \weightvect) - F(\ti \lambda_j)\)+\(F(\ti \lambda_j)-F_j(\ti \lambda_j)\) + \(\int c d\hat \gamma - \int c d\ti \gamma_j\) \\
&= \mathfrak{m}_{F_j} + \(F(\hat \lambda) - F(\ti \lambda_j)\) + \norm{F_j - F}_{L^\infty(\dom(F_j)\cup \dom(F))}+ \(\int c d\hat \gamma - \int c d\ti \gamma_j\). 
\]

Letting $j$ go to infinity we see that $\int c d\hat \gamma + F(\hat \weightvect) \leq \mathfrak{m}_{F}$ by Proposition \ref{eqn: bound on minimum value difference}, the lower semi-continuity of $F$, and the fact that $\ti \gamma_j$ converges weakly to $\hat \gamma$. Hence $\hat \gamma$ is a minimizer and by Corollary \ref{cor: unique minimizer} we see that $\hat \gamma = \ti \gamma$ as desired. 

%

\end{proof}

\bibliographystyle{alpha}
\bibliography{snowshoveling}

\end{document}